\numberwithin{equation}{section}
\begin{document}
\newcommand{\s}{\vspace{0.2cm}}
\newcommand*{\transp}[2][-3mu]{\ensuremath{\mskip1mu\prescript{\smash{\mathrm t\mkern#1}}{}{\mathstrut#2}}}

\newtheorem{theo}{Theorem}
\newtheorem*{theo*}{Theorem}
\newtheorem{prop}{Proposition}
\newtheorem{coro}{Corollary}
\newtheorem{lemm}{Lemma}
\newtheorem{example}{Example}
\theoremstyle{remark}
\newtheorem{rema}{\bf Remark}
\newtheorem{claim}{\bf Claim}
\newtheorem{defi}{\bf Definition}
\newtheorem{conj}{Conjecture}

\title[The singular sublocus $\mathcal{A}_g^{\mathbb{Z}_2}$ is connected]{On the connectedness of the singular locus\\ of the moduli space of principally \\polarized abelian varieties}
\date{}

\author{Sebasti\'an Reyes-Carocca and Rub\'i E. Rodr\'iguez}
\address{Departamento de Matem\'atica y Estad\'istica, Universidad de La Frontera, Avenida Francisco Salazar 01145, Casilla 54-D, Temuco, Chile.}
\email{sebastian.reyes@ufrontera.cl, rubi.rodriguez@ufrontera.cl}

\thanks{Partially supported by Postdoctoral Fondecyt Grant 3160002, Fondecyt Grant 1141099 and Anillo ACT 1415 PIA-CONICYT}
\keywords{Abelian varieties, Jacobian varieties, Group actions}
\subjclass[2010]{14K10, 14L30}

\begin{abstract} Let $\mathcal{A}_g$ denote the moduli space of principally polarized abelian varieties of dimension $g \ge 3.$ In this paper we prove the connectedness of the singular sublocus of $\mathcal{A}_g$ consisting of those abelian varieties which possess an involution different from $-id$.
\end{abstract}
\maketitle

\section{Introduction and statement of the result}

Let $\mathcal{A}_g$ denote the moduli space of principally polarized abelian varieties of dimension $g \ge 3.$ It is a known fact that $\mathcal{A}_g$ is endowed with a structure of complex analytic space, and that its singular locus agrees with the set of points representing principally polarized abelian varieties admitting automorphisms different from $\pm id.$

\s

The problem of determining, if they exist, those $g$ for which the singular locus of $\mathcal{A}_g$ is connected is a natural problem to deal with; however, according to the knowledge of the authors, it still remains as an open question.

A contribution on this respect has been a decomposition of the singular locus of $\mathcal{A}_g$ in terms of irreducible components; this work is due to Gonz\'alez-Aguilera, Mu\~{n}oz-Porras and  Garc\'ia Zamora in \cite{VG}.

\s

The aforementioned connectedness problem is very much in contrast with the corresponding situation for the moduli space $\mathscr{M}_g$ of algebraic curves of genus $g.$ Indeed, by considering the equisymmetric stratification of the moduli space introduced by Broughton in \cite{b}, in a series of articles due to Bartolini, Costa,  Izquierdo and Porto, the connectedness of the singular locus of $\mathscr{M}_g$ has been completely determined according to the value of $g.$ More precisely, the singular locus of $\mathscr{M}_g$ is connected if and only if $g=4,7,13,17,19$ and $59.$ See, for example, \cite{m2}.

The analogous situation for the branch locus of the Schottky space has been considered in \cite{rumi}.

\s

We also mention that the connectedness of the singular sublocus $\mathscr{M}_g^{\mathbb{Z}_2}$ consisting of those points representing algebraic curves admitting an involution (see \cite[Theorem 6]{m1}) has played an important role --and is, indeed, the starting step-- in studying the connectedness of the singular locus of $\mathscr{M}_g.$

\s

Accordingly, we shall denote by $\mathcal{A}_g^{\mathbb{Z}_2}$ the singular sublocus of $\mathcal{A}_g$ consisting of those points representing principally polarized abelian varieties admitting an involution different from $-id.$

\s

The present article is mainly devoted to prove the natural analog of \cite[Theorem 6]{m1} in the context of principally polarized abelian varieties. More precisely:

\s

{\bf Main Theorem.}
Let $g \ge 3.$ The singular sublocus $\mathcal{A}_g^{\mathbb{Z}_2}$ of  $\mathcal{A}_g$ is connected.

\s

The authors were unable to find results related to the singular locus of the moduli space of principally polarized abelian surfaces (the case $g=2$ is not covered by Oort's theorem in [15]). By contrast, the case of dimension one (i.e. for elliptic curves) is trivial.

\s

This article is organized as follows.

\s

\begin{enumerate}
\item[(a)] In Section \ref{s2} we shall introduce the basic background; namely, complex tori, principally polarized abelian varieties, their moduli spaces and a classification of  symplectic involutions due to Reiner in \cite{reiner}.

\s

\item[(b)]  By using the aforementioned classification of involutions, in Section \ref{propos} we shall introduce a decomposition of  $\mathcal{A}_g^{\mathbb{Z}_2}$ and study intersections between some of its components.

\s

\item[(c)] The Main Theorem will be proved in Section \ref{s4ymedio}.

\s

\item[(d)] As a consequence of the results proved in \cite{VG}, a decomposition of $\mathcal{A}_g^{\mathbb{Z}_2}$ can be obtained in terms of irreducible subvarieties of it. In  Section \ref{s4} we shall study the relationship between this decomposition  and the one introduced in Section \ref{propos}.

\s

\item[(e)] In Section \ref{s5} we shall explore the relationship between the case of principally polarized abelian varieties and the one of algebraic curves.

\s

\item[(f)] In Section \ref{sejem} we shall illustrate our results for the case of principally polarized abelian threefolds.

\s

\item[(g)] Finally, in Section \ref{s6} we shall mention some remarks concerning future work on this respect.
\end{enumerate}

\section{Preliminaries} \label{s2}

\subsection{Complex tori}  We start this section by recalling known facts on complex tori and abelian varieties; we refer to the books \cite{bl} and \cite{debarre}.

A $g$-dimensional {\it complex torus} $X=V/\Lambda$ is the quotient of a $g$-dimensional complex vector space $V$ by a lattice $\Lambda$ of $V$. Each complex torus is an abelian group and a $g$-dimensional compact connected complex analytic manifold.

Complex tori can be described in a very concrete way, as follows. Choose bases \begin{equation} \label{bases}B_V=\{v_i\}_{i=1}^{g} \hspace{0.5 cm} \mbox{and} \hspace{0.5 cm} B_{\Lambda}=\{\lambda_j\}_{j=1}^{2g}\end{equation}of $V$ as a $\mathbb{C}$-vector space and of $\Lambda$ as a $\mathbb{Z}$-module, respectively. It follows that there are complex constants $$\{\pi_{ij}\}_{1 \le i \le g, 1\le j \le 2g} \hspace{0.5 cm} \mbox{such that} \hspace{0.5 cm}  \lambda_j=\Sigma_{i=1}^g \pi_{ij} v_i.$$

Then the matrix $$\Pi=(\pi_{ij}) \in \mbox{M}(g \times 2g, \mathbb{C})$$represents $X$ and is known as the {\it period matrix} for $X$ with respect to \eqref{bases}.

\subsection{Homomorphisms}  A {\it homomorphism} between two complex tori $X_1=V_1/\Lambda_1$ and $X_2=V_2/\Lambda_2$ is a holomorphic map which respects the group law. Each homomorphism $\varphi : X_1 \to X_2$ yields two maps:
\s

\begin{enumerate}
\item[(a)] the {\it analytic representation} $\rho_{a}(\varphi): V_1 \to V_2,$ which is the unique $\mathbb{C}$-linear map from $V_1$ to $V_2$ lifting $\varphi,$ and
\s

\item[(b)]  the {\it rational representation} $\rho_r(\varphi): \Lambda_1 \to \Lambda_2,$ which is the $\mathbb{Z}$-linear map from $\Lambda_1$ to $\Lambda_2$ given by the restriction of $\rho_a(\varphi)$ to $\Lambda_1.$
\end{enumerate}
\s

For $i=1,2,$ let $\Pi_i \in \mbox{M}(g_i \times 2g_i, \mathbb{C})$ denote the period matrix of $X_i$ with respect to chosen bases, where $g_i$ stands for the dimension of $X_i$. Then, with respect to these bases, $\rho_a(\varphi)$ and $\rho_r(\varphi)$ are given by matrices $$M \in \mbox{M}(g_2 \times g_1, \mathbb{C}) \,\,\, \mbox{ and } \,\,\,R \in \mbox{M}(2g_2 \times 2g_1, \mathbb{Z})$$respectively, that satisfy \begin{equation} \label{hom} M \Pi_1 = \Pi_2 R.\end{equation}

Conversely, given $M$ and $R$ satisfying \eqref{hom}, there is a corresponding homomorphism $X_1 \to X_2.$ Note that, if $\varphi$ is an {\it isomorphism} (i.e. a bijective homomorphism) then $M$ and $R$ are invertible.

\subsection{Abelian varieties} An {\it abelian variety} is by definition a complex torus which is also a complex projective algebraic variety. Each abelian variety $X=V/\Lambda$ admits a {\it polarization}; namely, a non-degenerate real alternating form $\Theta$ on $V$ such that for all $v,w \in V$$$\Theta(iv, iw)=\Theta(v,w) \hspace{0.5 cm} \mbox{and} \hspace{0.5 cm} \Theta(\Lambda \times \Lambda) \subset \mathbb{Z}.$$

We recall that the polarization determines an embedding of the complex torus in the projective space.

If the elementary divisors of $\Theta|_{\Lambda \times \Lambda}$ are $\{1, \stackrel{g}{\ldots}, 1\},$ where $g$ is the dimension of $X,$ then
the polarization $\Theta$ is called {\it principal} and the pair $(X,\Theta)$ is called a
{\it principally polarized abelian variety} (from now on we write {\it ppav} for short).

Let $(X=V/\Lambda, \Theta)$ be a ppav of dimension $g.$ Then there exists a basis for $\Lambda$ such that the matrix for $\Theta_{\Lambda \times \Lambda}$ with respect to it is given by \begin{equation}\label{simpl}
J = \left( \begin{smallmatrix}
0 & I_g \\
-I_g & 0
\end{smallmatrix} \right);
\end{equation}such a basis  is termed {\it symplectic.} Furthermore, there exist a basis for $V$ and a symplectic basis for $\Lambda$ with respect to which the period matrix for $X$ is $$\Pi=(I_g \, Z),$$where $Z$ belongs to the {\it Siegel upper-half space} $$\mathscr{H}_g=\{ Z \in \mbox{M}(g, \mathbb{C}) : Z = \transp{Z}, \, \mbox{Im}(Z) >0\}$$with $\transp{Z}$ denoting the transpose matrix of $Z.$

\subsection{The moduli space of ppavs} An {\it isomorphism} between two ppavs is an isomorphism of the underlying complex torus structures that preserves the polarizations. It is a known fact that the automorphism group of a ppav is finite.

For $i=1,2,$  let $(X_i,\Theta_i)$ be a ppav of dimension $g$ and let $\Pi_i=(I_{g} \, Z_i)$ be the period matrix of $X_i$. Then an isomorphism between $(X_1, \Theta_1)$ and $(X_2, \Theta_2)$ is given by invertible matrices $M \in \mbox{GL}(g, \mathbb{C})$ and $R \in \mbox{GL}(2g, \mathbb{Z})$ such that\begin{equation} \label{ig} M(I_{g} \, Z_1)=(I_{g} \, Z_2)R.\end{equation}

Since $R$ preserves the polarization \eqref{simpl}, it belongs to the symplectic group $$\mbox{Sp}(2g, \mathbb{Z})=\{ R \in  \mbox{M}(2g, \mathbb{Z}) :  \transp{R}J R=J  \}.$$

It follows from \eqref{ig} that the correspondence $\mbox{Sp}(2g, \mathbb{Z}) \times \mathscr{H}_g \to \mathscr{H}_g$
\begin{equation}  \label{cuadradat}     (R=  \left( \begin{smallmatrix}
A & B \\
C & D
\end{smallmatrix} \right) , Z ) \mapsto (A+ZC)^{-1}(B+ZD)
\end{equation}defines an action that identifies period matrices representing isomorphic ppavs.

Hence, the quotient $$\mathscr{H}_g \to \mathcal{A}_g:=\mathscr{H}_g/ \mbox{Sp}(2g, \mathbb{Z})$$is the {\it moduli space} of isomorphism classes of ppavs of dimension $g.$ See \cite{oort}.

We recall that an automorphism of a ppav represented by a period matrix $(I_g \, Z)$ with $Z \in \mathscr{H}_g$ is given by a  symplectic matrix $R$ as in \eqref{cuadradat} satisfying \begin{equation} \label{cheq}(A+ZC)^{-1}(B+ZD)=Z.\end{equation}In particular, if $R= \left( \begin{smallmatrix}
A & 0 \\
0 & D
\end{smallmatrix} \right)$ then \eqref{cheq} turns into $ZD=AZ,$ with $D=\transp{(A^{-1})}$.

\subsection{Reiner's classification of involutions} Let us denote by the symbol $+$ the direct sum of matrices, and by $I_n$ the  $n \times n$ identity matrix.

In \cite{reiner} Reiner succeeded in providing a classification of symplectic matrices $R \in \mbox{Sp}(2g, \mathbb{Z})$ such that $R^2=I_{2g}.$ Namely, each such $R$ is, up to symplectic conjugation, of the form $$W(x,y,z)+ \transp{W}(x,y,z)$$where $W(x,y,z)$ is the block matrix given by
\begin{displaymath}
W(x,y,z) =  J_1+ \stackrel{x}{\cdots} + J_1+ (-I_y)+I_z \,\, \,\mbox{ with } \,\,\, J_1 = \left( \begin{smallmatrix}
1 & 0 \\
1 & -1
\end{smallmatrix} \right),
\end{displaymath} for some non-negative integers $x,y,z$ such that $2x+y+z=g.$

\section{A decomposition of the singular sublocus $\mathcal{A}_g^{\mathbb{Z}_2}$}\label{propos}

Let $X$ be a ppav of dimension $g$ and let $j$ be an {\it involution} on it; namely, an order two automorphism of $X$ as ppav.

Let $x,y,z$ be non-negative integers such that $2x+y+z=g.$ We shall say that the involution $j$ is of {\it type} $(x,y,z)$ if its rational representation $\rho_r(j)$ is conjugate to
$$W(x,y,z)+ \transp{W}(x,y,z)$$in the symplectic group.

Note that there are two {\it extremal} types: namely, $(0,0,g)$ and $(0,g,0).$ The first case corresponds to $id,$ which is not an involution. By contrast, every ppav admits the involution $-id,$ which is of type $(0,g,0).$

\s

We recall that $\mathcal{A}_g^{\mathbb{Z}_2}$ denotes the singular sublocus of $\mathcal{A}_g$ consisting of those ppavs admitting an involution different from $-id$. Accordingly, we set \begin{equation} \label{co} \mathcal{A}_g^{\mathbb{Z}_2, (x,y,z)} =\{[X] \in \mathcal{A}_g^{\mathbb{Z}_2} : \exists \, j : X \to X \mbox{ involution of type } (x,y,z) \}\end{equation} for each $(x,y,z)$ in the collection of {\it admissible triples}$$\mathscr{P}_g=\{(x,y,z) \in (\mathbb{N} \cup \{0\})^3: 2x+y+z=g \}-\{(0,0,g), (0,g,0)\}.$$

It follows immediately that $\mathcal{A}_g^{\mathbb{Z}_2}$ admits the following decomposition:\begin{equation} \label{dec1}\mathcal{A}_g^{\mathbb{Z}_2}= \bigcup_{{(x,y,z) \in \mathscr{P}_g}} \mathcal{A}_g^{\mathbb{Z}_2, (x,y,z)} \end{equation}

We shall say that \eqref{co} is the {\it component} of type $(x,y,z)$ of $\mathcal{A}_g^{\mathbb{Z}_2}.$ Following \cite[Corollary 5.4]{surveyrubi}, its dimension  is $$\dim \mathcal{A}_g^{\mathbb{Z}_2, (x,y,z)}  = x^2+x+xy+xz+\tfrac{1}{2}(y^2+y+z^2+z).$$

\begin{prop} \label{p1}
Let $(x,y,z)$ be an admissible triple. Then $$\mathcal{A}_g^{\mathbb{Z}_2, (x,y,z)} = \mathcal{A}_g^{\mathbb{Z}_2, (x,z,y)}. $$
\end{prop}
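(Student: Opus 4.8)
The plan is to use the automorphism $-\mathrm{id}$, which every ppav carries, to move between the two types. Given $[X] \in \mathcal{A}_g^{\mathbb{Z}_2,(x,y,z)}$, let $j$ be an involution of $X$ of type $(x,y,z)$ and consider $j' := (-\mathrm{id}) \circ j$. Since $-\mathrm{id}$ and $j$ are both automorphisms of $X$ as a ppav, so is $j'$; moreover $(j')^2 = j^2 = \mathrm{id}$, and $j' \neq \mathrm{id}$ because otherwise $j = -\mathrm{id}$ would have rational representation $-I_{2g} = W(0,g,0) + \transp{W}(0,g,0)$, i.e. the excluded type $(0,g,0)$. Hence $j'$ is again a genuine involution. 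The whole statement will follow once I show that $j'$ has type $(x,z,y)$: this yields the inclusion $\mathcal{A}_g^{\mathbb{Z}_2,(x,y,z)} \subseteq \mathcal{A}_g^{\mathbb{Z}_2,(x,z,y)}$, and the reverse inclusion follows by the same argument applied to the (equally admissible) triple $(x,z,y)$, since $(-\mathrm{id})\circ j' = j$.

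To identify the type of $j'$ I would pass to rational representations. As $\rho_r(-\mathrm{id}) = -I_{2g}$, we have $\rho_r(j') = -\rho_r(j)$; so if $\rho_r(j)$ is $\mbox{Sp}(2g,\mathbb{Z})$-conjugate to $R := W(x,y,z)+\transp{W}(x,y,z)$, then $\rho_r(j')$ is conjugate to $-R$ by the very same symplectic matrix. Thus it suffices to prove that $-R$ is symplectically conjugate to $W(x,z,y)+\transp{W}(x,z,y)$.

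Here I would exploit the block shape $R = \mathrm{diag}(W(x,y,z),\, \transp{W}(x,y,z))$ together with the observation that, for any $P \in \GL(g,\mathbb{Z})$, the matrix $\mathrm{diag}(P,\, \transp{(P^{-1})})$ lies in $\mbox{Sp}(2g,\mathbb{Z})$ and conjugates $\mathrm{diag}(-W,\, -\transp{W})$ to $\mathrm{diag}\bigl(P^{-1}(-W)P,\; \transp{(P^{-1}(-W)P)}\bigr)$, again of the required block form. Everything therefore reduces to finding $P \in \GL(g,\mathbb{Z})$ with $P^{-1}\bigl(-W(x,y,z)\bigr)P = W(x,z,y)$. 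Writing $-W(x,y,z) = (-J_1)^{\oplus x} \oplus I_y \oplus (-I_z)$ and $W(x,z,y) = J_1^{\oplus x} \oplus (-I_z) \oplus I_y$, and permuting the coordinate blocks by an integral permutation matrix to interchange the $I_y$ and $-I_z$ summands, this collapses to the single $2\times 2$ assertion that $-J_1$ is $\GL(2,\mathbb{Z})$-conjugate to $J_1$.

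The heart of the matter is this last integral conjugacy, which I expect to be the only step requiring a genuine computation: one checks directly that $Q = \left(\begin{smallmatrix} 1 & -2 \\ 0 & -1 \end{smallmatrix}\right) \in \GL(2,\mathbb{Z})$ satisfies $(-J_1)Q = Q J_1$. The conceptually delicate point it secures is that the number $x$ of $J_1$-blocks is \emph{preserved} under $j \mapsto -j$, while the multiplicities $y$ and $z$ of the $\pm 1$ eigenblocks get interchanged. This is exactly where one must be careful: the $\pm 1$ eigenvalue multiplicities of $R$ alone do not determine $x$ (over $\mathbb{R}$ a symplectic involution is fixed by them), so it is essential that the conjugation be performed integrally, inside $\mbox{Sp}(2g,\mathbb{Z})$, and not merely over $\mathbb{Q}$ or $\mathbb{R}$.
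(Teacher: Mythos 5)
Your proposal is correct and follows essentially the same route as the paper: compose $j$ with $-\mathrm{id}$ and observe that the rational representation $-\bigl(W(x,y,z)+\transp{W}(x,y,z)\bigr)$ is symplectically conjugate to $W(x,z,y)+\transp{W}(x,z,y)$. The only difference is that the paper asserts this conjugacy without proof, whereas you supply the explicit conjugating matrix $\mathrm{diag}\bigl(P,\transp{(P^{-1})}\bigr)$ built from $Q=\left(\begin{smallmatrix}1 & -2\\ 0 & -1\end{smallmatrix}\right)$ and a block permutation --- a correct and welcome verification of the step the paper leaves implicit.
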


\begin{proof}
Let $X$ be a ppav admitting an involution $j$ of type $(x,y,z).$ Possibly after an isomorphism, there exists a symplectic basis $B=\{\lambda_i\}$ for $\Lambda,$ where $X=V/\Lambda,$ with respect to which the rational representation of $j$ is given by  $$W(x,y,z)+\transp{W}(x,y,z).$$

With respect to $B,$ the rational representation of $(-id) \circ j$ is then given by \begin{displaymath}
 R =(-1)W(x,y,z)+(-1)\transp{W}(x,y,z),\end{displaymath}which is conjugate in $\mbox{Sp}(2g, \mathbb{Z})$ to $$W(x,z,y)+\transp{W}(x,z,y).$$
 Thus $(-id) \circ j$ is an involution of $X$ of type $(x,z,y),$ and the proof is done.
\end{proof}

This remaining part of this section is devoted to study intersections between the components in the decomposition \eqref{dec1}.

\subsection{Components $(x,y,z)$ with $x=0$.} An admissible tripe $(x,y,z)$ with $x=0$ has the form $$(0,y,g-y) \hspace{0.5 cm} \mbox{with}  \hspace{0.5 cm}  1 \le y \le g-1.$$

The following proposition asserts that all the components with $x=0$ have non-empty common intersection.

\begin{prop} \label{t1} The intersection \begin{equation} \label{conxcero}\bigcap_{1 \le  y \le  g-1} \mathcal{A}_g^{\mathbb{Z}_2, (0,y,g-y)}\end{equation}is not empty. Furthermore, it contains the $g$-dimensional family $\mathcal{F}_0$ consisting of those ppavs which are isomorphic as ppavs to the product of $g$ elliptic curves.
\end{prop}

\begin{proof} By Proposition \ref{p1} it is enough to consider the intersection \eqref{conxcero} with the index $y$ running in $\{1, \ldots, [\frac{g}{2}]\}.$

Let $X =V/\Lambda \in \mathcal{F}_0,$ say $$X \cong E_1 \times \cdots \times E_g$$where each $E_j$ is an elliptic curve.  Choose  $a_1, \ldots, a_g \in \mathscr{H}_1$ such that $$E_j \cong \langle 1 \rangle_{\mathbb{C}}/\langle 1, a_j \rangle_{\mathbb{Z}}.$$

It is not difficult to construct bases $B_V$ and $B_{\Lambda}=\{\lambda_i\}$ of $V$ and $\Lambda$ respectively, with respect to which the period matrix for $X$ is $(I_g \, Z),$ with $$Z=\mbox{diag}(a_1, \ldots, a_g).$$

For each $1 \le y \le [\tfrac{g}{2}],$ consider the automorphism $\Phi_y$ of $\Lambda$ given by
\begin{displaymath}
\lambda_l \mapsto \left\{ \begin{array}{ll}
 -\lambda_l & \textrm{if $l \in \{1, \ldots, y, g+1, \ldots, g+y\}$}\\
\,\,\,  \,\lambda_l & \textrm{if $l \in \{y+1, \ldots, g, g+y+1, \ldots, 2g\}.$}
\end{array} \right.
\end{displaymath}

It is straightforward to see that, with respect to $B_{\Lambda}$, the automorphism $\Phi_y$ can be represented by the matrix
$$W(0,y,g-y)+W(0,y,g-y).$$

Now, note that the equality $$W(0,y,g-y) \cdot Z=Z \cdot W(0,y,g-y)$$holds for each $1 \le y \le [\tfrac{g}{2}].$ Hence $\Phi_y$ induces an involution of $X$ of type $(0,y,g-y)$ for each $y,$ showing that $$X \in \bigcap_{1 \le y \le [\frac{g}{2}]} \mathcal{A}_g^{\mathbb{Z}_2, (0,y,g-y)}.$$Clearly, the collection $\mathcal{F}_0$ depends on $g$ moduli, and the proof is done.
\end{proof}

\subsection{Components of type $(x,y,z)$ with $x \ge 1$} An admissible triple $(x,y,z),$ with fixed $1 \le x \le [\tfrac{g}{2}]$, has the form $$(x,y,g-2x-y) \hspace{0.5 cm} \mbox{with}  \hspace{0.5 cm}  0 \le y \le g-2x.$$
The following result generalizes Proposition \ref{t1} to the case $x \ge 1.$

\begin{prop}\label{pp4} Let $1 \le x \le [\tfrac{g}{2}]$ fixed. The intersection \begin{equation} \label{ceni} \bigcap_{0 \le y \le g-2x} \mathcal{A}_g^{\mathbb{Z}_2, (x,y,g-2x-y)}\end{equation}is not empty. Furthermore, it contains the $(x^2-x+g)$-dimensional family $\mathcal{F}_x$ of ppavs which are isomorphic as ppavs to the product $$A_1  \times E_1 \times \cdots \times E_{g-2x}$$where $E_1, \ldots, E_{g-2x}$ are elliptic curves, and $A_1$ is a ppav of dimension $2x$ represented by the period matrix $(I_{2x} \, Z_1)$ where $Z_1 \in \mathscr{H}_{2x}$ is of the form
\begin{equation} \label{curacavi2}\left( \begin{smallmatrix}
X_1 & X_2 & \cdots  & X_x \\
\, & X_{x+1} & \cdots & X_{2x-1} \\
\, & \, & \ddots & \vdots \\
\, & \, & \, & X_{x(x+1)/2}
\end{smallmatrix} \right) \hspace{0.5 cm} \mbox{where} \hspace{0.5 cm} X_j=\left(  \begin{smallmatrix}
2a_{j} & a_{j} \\
 a_{j} & b_{j}
\end{smallmatrix} \right).\end{equation}
\end{prop}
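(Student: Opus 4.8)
The plan is to follow the strategy of Proposition \ref{t1}: to exhibit a single ppav $X$, depending on the stated parameters, that carries an involution of every admissible type $(x,y,g-2x-y)$ at once. I would take $X = A_1 \times E_1 \times \cdots \times E_{g-2x}$, so that in a suitable symplectic basis its period matrix is $(I_g\,Z)$ with $Z$ block-diagonal, $$Z=\left( \begin{smallmatrix} Z_1 & 0 \\ 0 & Z_2 \end{smallmatrix} \right),\qquad Z_1 \in \mathscr{H}_{2x},\quad Z_2=\mathrm{diag}(c_1,\ldots,c_{g-2x}),$$ where $Z_2$ records the elliptic factors and $Z_1$ is to be pinned down. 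Fixing $y$ and setting $z=g-2x-y$, I want the lattice automorphism $\Phi_y$ whose rational representation in this basis is $R=W(x,y,z)+\transp{W}(x,y,z)$ to descend to $X$. This $R$ is symplectic by Reiner's classification; writing it in block form as $\left( \begin{smallmatrix} W & 0 \\ 0 & \transp{W} \end{smallmatrix} \right)$ with $W=W(x,y,z)$, equation \eqref{cheq} shows that $\Phi_y$ induces an automorphism of the ppav exactly when $WZ=Z\transp{W}$. Since $J_1^2=I_2$ we get $W^2=I_g$, whence $R^2=I_{2g}$; as $R$ is moreover nontrivial (because $x\ge1$), $\Phi_y$ is a genuine involution, and having rational representation in Reiner's normal form it is of type $(x,y,z)$, which differs from $(0,g,0)$ so that $\Phi_y\ne -id$.

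The heart of the argument is thus the single identity $W(x,y,z)\,Z=Z\,\transp{W}(x,y,z)$. As $W$ and $Z$ share the splitting into the first $2x$ and the last $g-2x$ coordinates, it decouples into $$W_1Z_1=Z_1\transp{W_1}\quad\text{and}\quad W_2Z_2=Z_2\transp{W_2},$$ with $W_1=J_1+\stackrel{x}{\cdots}+J_1$ and $W_2=(-I_y)+I_z$. The second identity holds for every $y$ because $Z_2$ and $W_2$ are diagonal, exactly as in Proposition \ref{t1}. For the first, I would decompose $Z_1$ into $2\times 2$ blocks $X_{ij}$; block-diagonality of $W_1$ turns $W_1Z_1=Z_1\transp{W_1}$ into the independent equations $J_1X_{ij}=X_{ij}\transp{J_1}$. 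A short computation solving $J_1X=X\transp{J_1}$ for a $2\times 2$ matrix $X$ shows that its solutions are exactly those of the shape $\left( \begin{smallmatrix} 2a & a \\ a & b \end{smallmatrix} \right)$; imposing additionally $Z_1=\transp{Z_1}$, which (as every such block is symmetric) amounts to $X_{ij}=X_{ji}$, leaves precisely the block-symmetric matrices of \eqref{curacavi2}. The decisive point is that $W_1$ is independent of $y$, so one and the same $Z_1$ of this form solves the required identity for all $y$ simultaneously; this is what forces the resulting $X$ into the intersection \eqref{ceni}.

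Finally I would verify non-emptiness and count moduli. Choosing the parameters in an open region where $\mathrm{Im}(Z_1)>0$ makes $Z_1\in\mathscr{H}_{2x}$ and $A_1$ a genuine ppav, producing an explicit point of \eqref{ceni} and showing $\mathcal{F}_x$ lies in the intersection. For the dimension, $Z_1$ is determined by its $\tfrac{x(x+1)}{2}$ upper-triangular blocks $X_j$, each with the two free parameters $a_j,b_j$, hence $x(x+1)=x^2+x$ moduli for $A_1$; adding the $g-2x$ moduli of the elliptic factors gives $x^2+x+(g-2x)=x^2-x+g$, the count being unaffected by the passage from $\mathscr{H}_g$ to $\mathcal{A}_g$ since the latter is a quotient by the discrete group $\mathrm{Sp}(2g,\mathbb{Z})$. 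The step I expect to demand the most care is the block computation $J_1X=X\transp{J_1}$ together with the symmetry bookkeeping for $Z_1$, since it is exactly this that both fixes the normal form \eqref{curacavi2} and guarantees a uniform choice of $Z$ across all types $(x,y,g-2x-y)$.
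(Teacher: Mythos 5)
Your proposal is correct and takes essentially the same route as the paper: the same family $\mathcal{F}_x$ of products with block-diagonal period matrix, the same reduction of the descent condition \eqref{cheq} to the commutation identity $W(x,y,z)\,Z = Z\,\transp{W}(x,y,z)$, which decouples into the blockwise equations $J_1X_j = X_j\transp{J_1}$, and the same moduli count $x(x+1)+(g-2x)=x^2-x+g$. The only (harmless) differences are that you verify the identity directly for all $0\le y\le g-2x$ where the paper restricts to $y\le[\tfrac{g-2x}{2}]$ and invokes Proposition \ref{p1}, and that you derive the block shape $\left(\begin{smallmatrix}2a & a\\ a & b\end{smallmatrix}\right)$ by solving $J_1X=X\transp{J_1}$ rather than citing the normal form of \cite[Theorem 5.3]{surveyrubi}.
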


\begin{proof} Note that if $g$ is even and $x=\tfrac{g}{2}$ then the  intersection \eqref{ceni} is simply the component of type $(\tfrac{g}{2},0,0).$ Hence, we may assume $g$ odd or $g$ even with $x \le \tfrac{g}{2}-1.$ Moreover, by Proposition \ref{p1}, it is enough to consider the intersection \eqref{ceni} with the index $y$ running in $\{0, \ldots, [\frac{g-2x}{2}]\}.$

\s

Let $X$ be a ppav admitting an involution $j$ of type $(x,0,g-2x).$ Then, possibly after an isomorphism, there exists a symplectic basis $B=\{\lambda_i\}$ for $\Lambda,$ where $X=V/\Lambda,$ with respect to which the rational representation of $j$ is given by the matrix $$W(x,0,g-2x)+\transp{W}(x,0,g-2x)$$

 Following \cite[Theorem 5.3]{surveyrubi}, the period matrix of $X$ with respect to $B$ is $(I_g \, Z)$ with $Z \in \mathscr{H}_g$ of the form  \begin{displaymath}
Z= \left( \begin{smallmatrix}
Z_1 & U \\
\transp{U} & Z_2
\end{smallmatrix} \right),
\end{displaymath}where $Z_1  \in \mbox{GL}(2x, \mathbb{C})$ is symmetric and $U \in \mbox{GL}(2x \times (g-2x), \mathbb{C})$ are of the form \begin{equation*} Z_1=\left( \begin{smallmatrix}
X_1 & X_2 & \cdots  & X_x \\
\, & X_{x+1} & \cdots & X_{2x-1} \\
\, & \, & \ddots & \vdots \\
\, & \, & \, & X_{x(x+1)/2}
\end{smallmatrix} \right) \hspace{0.5 cm} U= \left( \begin{smallmatrix}
U_1 & U_2 & \cdots  & U_{g-2x} \\
U_{g-2x+1} & \, & \cdots & U_{2(g-2x)} \\
\, & \, & \ddots & \vdots \\
\, & \, & \, & U_{x(g-2x)}
\end{smallmatrix} \right) \end{equation*}where $X_j=\left(  \begin{smallmatrix}
2a_{j} & a_{j} \\
 a_{j} & b_{j}
\end{smallmatrix} \right),$ $U_j=\left(  \begin{smallmatrix}
2d_{j}  \\
 d_{j}
\end{smallmatrix} \right),$ and $Z_2 \in \mbox{GL}(g-2x, \mathbb{C})$ is symmetric.

\s

We now restrict to the subcollection $\mathcal{F}_x$ of ppavs possessing an involution of type $(x,0,g-2x)$ with period matrix as before, but satisfying the following two additional conditions:
\begin{enumerate}
\item[(a)] $U=0,$ and
\item[(b)] if $Z_2=(b_{ij})_{i,j=2x+1}^{g}$ then $b_{ij} =0$ if and only if $i \neq j.$
\end{enumerate}In other words, we consider ppavs with period matrix $(I_g \, Z)$ with \begin{equation*} \label{llaves12}Z= Z_1  + \mbox{diag}(b_{2x+1,2x+1}, \ldots, b_{g,g}).\end{equation*}with $Z_1$ as in \eqref{curacavi2}.

For each $1 \le y \le [\tfrac{g-2x}{2}],$ we define an automorphism $\Phi_y$ of $\Lambda$ as follows:\begin{displaymath}
\Phi_{y}(\lambda_l) = \left\{ \begin{array}{ll}
 \lambda_l+\lambda_{l+1}  & \textrm{if $l=2 u - 1$ for some $1 \le u \le x$}\\
  -\lambda_l  & \textrm{if $l=2 u$ for some $1 \le u \le x$}\\
   -\lambda_l  & \textrm{if $2x+1 \le l \le 2x+y$ or  $2x+1+g \le l \le 2x+y+g$}\\
   \lambda_l  & \textrm{if $2x+y+1 \le l \le g$ or $2x+y+1+g \le l \le 2g$}\\
  \lambda_l  & \textrm{if $l=2 u -1 +g $ for some $1 \le u \le x$}\\
  \lambda_{l-1}-\lambda_l  & \textrm{if $l=2 u + g$ for some $1 \le u \le x$}
\end{array} \right.
\end{displaymath}

It is easy to see that for each $y$ the automorphism $\Phi_y$ is represented, with respect to $B,$ by the matrix $$W(x,y,g-2x-y)+\transp{W}(x,y,g-2x-y).$$

Now, the fact that $\left( \begin{smallmatrix}
1 & 0 \\
1 & -1
\end{smallmatrix} \right) \cdot X_j = X_j  \cdot \transp{\left( \begin{smallmatrix}
1 & 0 \\
1 & -1
\end{smallmatrix} \right)}$ implies that the equality$$W(x,y,g-2x-y) \cdot Z = Z \cdot \transp{ W}(x,y,g-2x-y)$$holds, for each $1 \le y \le [\tfrac{g-2x}{2}].$

Hence, we are in position to asserts that each ppav in $\mathcal{F}_x$ admits, in addition to an involution of type $(x,0,g-2x)$, involutions of type $(x,y,g-2x-y)$ for each $1 \le y \le [\tfrac{g-2x}{2}].$ This proves the first statement.

\s

Note that the family $\mathcal{F}_x$ depends on $$x(x+1)+(g-2x)=x^2-x+g$$moduli. Moreover, as the period matrices of the members of $\mathcal{F}_x$ have the form $(I_g \, Z)$ with $$Z= Z_1  + \mbox{diag}(b_{2x+1,2x+1}, \ldots, b_{g,g}),$$we can conclude that they can be decomposed as the product of a ppav of dimension $2x$ with period matrix $(I_{2x} \, Z_1),$ and $g-2x$ elliptic curves with period matrix $(1 \, b_{ii})$ for $2x+1 \le i \le g$.

The proof of the proposition is done.
\end{proof}

\subsection{Components with $x \ge 1$ and a component with $x=0$}

We now proceed to show that for each fixed $1 \le x \le [\tfrac{g}{2}]$, the family $\mathcal{F}_x$ intersects a component of the form $(0, y, g-y)$ for a suitable choice of $y.$

\begin{prop} \label{p4} Let $1 \le x \le [\tfrac{g}{2}]$ fixed. Assume $g$ odd, or $g$ even with $x \neq \tfrac{g}{2}.$ Then $$\mathcal{F}_x \subset \mathcal{A}_g^{\mathbb{Z}_2, (0,2x,g-2x)}.$$
\end{prop}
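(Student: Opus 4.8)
The plan is to produce, on an arbitrary member of $\mathcal{F}_x$, the involution that acts as $-id$ on the $2x$-dimensional factor $A_1$ and as the identity on each elliptic factor, and to check that it has type $(0,2x,g-2x)$. Fix $X \in \mathcal{F}_x$; by Proposition \ref{pp4} there is a symplectic basis $B=\{\lambda_i\}$ of $\Lambda$ for which the period matrix of $X$ is $(I_g\,Z)$ with $Z=\mbox{diag}(Z_1,b_{2x+1,2x+1},\ldots,b_{g,g})$ and $Z_1\in\mathscr{H}_{2x}$ as in \eqref{curacavi2}. In this basis the $a$- and $b$-cycles of the factor $A_1$ are $\lambda_1,\ldots,\lambda_{2x}$ and $\lambda_{g+1},\ldots,\lambda_{g+2x}$, while the remaining pairs $(\lambda_i,\lambda_{i+g})$ with $2x+1\le i\le g$ are the cycles of the elliptic factors.

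I would then define the automorphism $j'$ of $\Lambda$ by $\lambda_l\mapsto-\lambda_l$ for $l\in\{1,\ldots,2x\}\cup\{g+1,\ldots,g+2x\}$ and $\lambda_l\mapsto\lambda_l$ otherwise; its matrix with respect to $B$ is the diagonal matrix $R=\mbox{diag}(-I_{2x},I_{g-2x},-I_{2x},I_{g-2x})$. Being $\pm1$-diagonal with identical upper and lower $g\times g$ blocks, $R$ preserves $J$, so $R\in\mbox{Sp}(2g,\mathbb{Z})$ and $R^2=I_{2g}$. To see that $R$ descends to an automorphism of the ppav one checks \eqref{cheq}: writing $R=\mbox{diag}(A,D)$ with $A=D=\mbox{diag}(-I_{2x},I_{g-2x})$, the condition reduces to $AZ=ZD$, which holds because $Z$ is block-diagonal and the $-I_{2x}$-block of $A$ sits exactly over the $Z_1$-block of $Z$. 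Hence $R$ defines an involution $j'$ of $X$ whose analytic representation is $A$; that is, $j'$ is $-id$ on $A_1$ and the identity on the elliptic factors.

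It remains to identify the type of $j'$ and to distinguish it from $-id$. Here the decisive observation is that, in the symplectic basis $B$, the rational representation $R$ is literally $W(0,2x,g-2x)+\transp{W}(0,2x,g-2x)$, because $W(0,2x,g-2x)=(-I_{2x})+I_{g-2x}$ is diagonal and symmetric; thus $j'$ is of type $(0,2x,g-2x)$ directly from the definition, with no symplectic conjugation required. Finally, the hypothesis that $g$ is odd, or even with $x\neq\tfrac{g}{2}$, forces $g-2x\ge1$, so $j'$ fixes at least one elliptic factor and is therefore different from $-id$; consequently $[X]\in\mathcal{A}_g^{\mathbb{Z}_2,(0,2x,g-2x)}$, and as $X\in\mathcal{F}_x$ was arbitrary the asserted inclusion follows. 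I do not anticipate a real obstacle: the single computation $AZ=ZD$ is immediate from block-diagonality, and the only point to watch is that the product symplectic basis places the cycles of $A_1$ in positions $\{1,\ldots,2x\}$ and $\{g+1,\ldots,g+2x\}$, which is exactly what makes $R$ appear in Reiner's normal form rather than merely conjugate to it.
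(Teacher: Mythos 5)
Your proposal is correct and follows essentially the same route as the paper: the paper also defines the involution acting as $-\lambda_l$ exactly on $\lambda_1,\ldots,\lambda_{2x},\lambda_{g+1},\ldots,\lambda_{g+2x}$, identifies its rational representation as $W(0,2x,g-2x)+W(0,2x,g-2x)$ in the given symplectic basis, and concludes from the commutation $W(0,2x,g-2x)\cdot Z = Z\cdot W(0,2x,g-2x)$. Your extra verifications (symplecticity of $R$, the reduction of \eqref{cheq} to $AZ=ZD$, and the observation that the hypothesis on $x$ guarantees $g-2x\ge 1$ so the triple is admissible and $j'\neq -id$) are details the paper leaves implicit, not a different method.
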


\begin{proof} Let $X=V/\Lambda$ be a member of $\mathcal{F}_x.$ According to the proof of Proposition \ref{pp4}, there is a symplectic basis $B=\{\lambda_i\}$ for $\Lambda$ with respect to which the period matrix of $X$ is $(I_g \, Z),$ with $Z \in \mathscr{H}_g$ of the form \begin{equation*} \label{llaves}Z= Z_1  + \mbox{diag}(b_{2x+1,2x+1}, \ldots, b_{g,g}).\end{equation*}with $Z_1$ as in \eqref{curacavi2}.

Define the automorphism $\Phi$ of $\Lambda,$ as follows:\begin{displaymath}
\Phi(\lambda_l) =  \left\{ \begin{array}{ll}
  -\lambda_l & \textrm{if $1 \le l \le 2x$ or $g+1 \le l \le g+2x$}\\
\,\,\,\, \lambda_l &  \textrm{if $2x+1 \le l \le g$ or $2x+1+g \le l \le 2g.$}
\end{array} \right.
\end{displaymath}

It is easy to see that $\Phi$ is represented, with respect to $B,$ by the matrix $$W(0,2x,g-2x)+W(0,2x,g-2x).$$

Now the result follows from the equality $$W(0,2x,g-2x) \cdot Z = Z \cdot W(0,2x,g-2x).$$
\end{proof}

It only remains to consider the case $g$ even and $x=\frac{g}{2}.$ Let $\widetilde{\mathcal{F}}_{g/2}$ denote the $g$-dimensional subfamily of $\mathcal{F}_{g/2}$ consisting of those ppavs with period matrix $Z_1$ as in \eqref{curacavi2}, but with $X_j=0$ if and only if $X_j$ is not in the diagonal.

 In other words,  \begin{equation} \label{mod}Z_1=X_1+X_{\frac{g+2}{2}}+ \cdots + X_{\frac{g(g+2)}{8}} \hspace{0.5 cm} \mbox{with} \hspace{0.5 cm} X_j=\left(  \begin{smallmatrix}
2a_{j} & a_{j} \\
 a_{j} & b_{j}
\end{smallmatrix} \right). \end{equation}

\begin{prop} \label{p5} If $g$ is even then$$\widetilde{\mathcal{F}}_{g/2} \subset \mathcal{A}_g^{\mathbb{Z}_2, (0, \frac{g}{2}, \frac{g}{2})}.$$
\end{prop}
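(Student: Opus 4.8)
The plan is to mimic the structure of the proofs of Propositions \ref{pp4} and \ref{p4}, exhibiting an explicit involution of type $(0,\tfrac{g}{2},\tfrac{g}{2})$ on every member of $\widetilde{\mathcal{F}}_{g/2}$. Let me think about what the period matrix looks like here. A member $X=V/\Lambda$ of $\widetilde{\mathcal{F}}_{g/2}$ has, by construction, period matrix $(I_g\,Z_1)$ with $Z_1$ a \emph{block-diagonal} matrix whose diagonal blocks are the $2\times 2$ matrices $X_j=\left(\begin{smallmatrix} 2a_j & a_j \\ a_j & b_j\end{smallmatrix}\right)$; this is the content of \eqref{mod}, since the off-diagonal $X_j$ are set to zero. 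So $X$ decomposes as a product of $\tfrac{g}{2}$ ppavs of dimension $2$, each with period matrix $(I_2\,X_j)$. I want to produce an involution whose rational representation is conjugate to $W(0,\tfrac g2,\tfrac g2)+W(0,\tfrac g2,\tfrac g2)$.

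First I would write down the candidate automorphism $\Phi$ of $\Lambda$ directly, following the template of Proposition \ref{p4}. Since the target type is $(0,\tfrac g2,\tfrac g2)$, with $x=0$, the matrix $W(0,\tfrac g2,\tfrac g2)+W(0,\tfrac g2,\tfrac g2)$ is simply a diagonal $\pm1$ matrix: it sends exactly half of the basis vectors (the ``$-I_y$'' block with $y=\tfrac g2$) to their negatives. The natural choice, analogous to the maps $\Phi_y$ appearing earlier, is to pick one lattice generator out of each $2\times 2$ block and negate it. Concretely, relative to the symplectic basis $B=\{\lambda_i\}$ I would define $\Phi(\lambda_l)=-\lambda_l$ for $l$ in a set of $\tfrac g2$ ``first coordinates'' of the blocks (and their symplectic partners among the last $g$ indices), and $\Phi(\lambda_l)=\lambda_l$ otherwise, arranging the index set so that $\Phi$ is represented by $W(0,\tfrac g2,\tfrac g2)+W(0,\tfrac g2,\tfrac g2)$ and, crucially, so that the diagonal $\pm1$ pattern is \emph{compatible} with the block structure of $Z_1$.

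The second step, which is where the real content lies, is to verify the commutation relation
\begin{equation*}
W(0,\tfrac g2,\tfrac g2)\cdot Z_1 = Z_1 \cdot W(0,\tfrac g2,\tfrac g2),
\end{equation*}
which by \eqref{cheq} guarantees that $\Phi$ descends to an automorphism of the ppav $X$ (an involution, since $\Phi^2=\mathrm{id}$ and it is clearly of type $(0,\tfrac g2,\tfrac g2)$). Because $W(0,\tfrac g2,\tfrac g2)$ acts on each $2\times 2$ block as a diagonal sign matrix $\varepsilon=\mathrm{diag}(\pm1,\pm1)$, the relation reduces blockwise to checking $\varepsilon X_j = X_j \varepsilon$ for each $j$. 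The off-diagonal entries of $X_j$ are nonzero (they equal $a_j$), so the only sign patterns $\varepsilon$ that commute with $X_j$ are $\pm I_2$; negating a \emph{single} coordinate of a block will \emph{not} commute with $X_j$. I expect this to be the main obstacle, and resolving it is exactly why the reduction to the subfamily $\widetilde{\mathcal{F}}_{g/2}$ (with the off-diagonal blocks killed) is needed: the block-diagonality of $Z_1$ means I may negate \emph{both} coordinates of each chosen block, or whole blocks at a time, rather than single coordinates. The correct formulation of $\Phi$ is therefore to negate the two lattice generators of $\tfrac g2$ of the $2\times 2$ factors in their entirety — giving $\varepsilon=-I_2$ on those blocks and $\varepsilon=+I_2$ on the others — which automatically commutes with the diagonal $Z_1$ and is conjugate in $\mathrm{Sp}(g,\mathbb{Z})$ to $W(0,\tfrac g2,\tfrac g2)+W(0,\tfrac g2,\tfrac g2)$. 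Once $\Phi$ is chosen this way, the commutation is immediate from the block-diagonal form \eqref{mod} and the conclusion $\widetilde{\mathcal{F}}_{g/2}\subset\mathcal{A}_g^{\mathbb{Z}_2,(0,\frac g2,\frac g2)}$ follows, completing the proof.
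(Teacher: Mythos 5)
Your central observation is correct and sharper than you may realize: since $\operatorname{Im}Z>0$ forces $\operatorname{Im}(2a_j)>0$, every off-diagonal entry $a_j$ of a diagonal block is nonzero, so a diagonal sign pattern $\varepsilon$ commutes with $X_j=\left(\begin{smallmatrix}2a_j & a_j\\ a_j & b_j\end{smallmatrix}\right)$ only when $\varepsilon=\pm I_2$ on that block. But your repair runs into a parity obstruction that you never address. Negating whole $2\times 2$ blocks always negates an \emph{even} number of the $g$ coordinates, so the involutions you can build this way have type $(0,2k,g-2k)$; to reach type $(0,\tfrac g2,\tfrac g2)$ you need $2k=\tfrac g2$, i.e.\ $k=\tfrac g4$, which is an integer only when $g\equiv 0\pmod 4$. (Your count is also off as written: negating ``$\tfrac g2$ of the $2\times2$ factors in their entirety'' means negating \emph{all} $g/2$ factors, which gives $-\mathrm{id}$, of type $(0,g,0)$; you want $g/4$ factors, and the conjugation takes place in $\mathrm{Sp}(2g,\mathbb{Z})$, not $\mathrm{Sp}(g,\mathbb{Z})$.) So your proof is genuinely incomplete for $g\equiv 2\pmod 4$, where $\tfrac g2$ is odd and no whole-block sign pattern has the right type.

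For comparison, the paper's proof negates $\lambda_1,\dots,\lambda_{g/2}$ and $\lambda_{g+1},\dots,\lambda_{3g/2}$ outright and asserts $W(0,\tfrac g2,\tfrac g2)\cdot Z=Z\cdot W(0,\tfrac g2,\tfrac g2)$. When $4\mid g$ the sign boundary falls between blocks, this is exactly your construction with $g/4$ blocks negated, and both arguments are sound. When $g\equiv 2\pmod 4$, however, the boundary cuts the block occupying rows $\tfrac g2$ and $\tfrac g2+1$, and by your own blockwise criterion the asserted commutation \emph{fails}: comparing the $(\tfrac g2,\tfrac g2+1)$ entries of the two products gives $-a_{j}=a_{j}$ with $a_{j}\neq 0$. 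In other words, your analysis exposes an error in the paper's own proof rather than merely a gap in yours; worse, the statement seems irreparable in that case. An involution of type $(0,y,z)$ is equivalent to a splitting of the ppav as a product of ppavs of dimensions $y$ and $z$, and a generic member of $\widetilde{\mathcal{F}}_{g/2}$ is a product of $g/2$ abelian surfaces $(I_2\,X_j)$ that are irreducible as ppavs (generically they are Jacobians of bielliptic genus-$2$ curves); by uniqueness of the decomposition of a ppav into irreducible factors, every ppav factor of such a member has even dimension, never the odd dimension $\tfrac g2$. So for $g\equiv 2\pmod 4$ a generic member of $\widetilde{\mathcal{F}}_{g/2}$ admits no involution of type $(0,\tfrac g2,\tfrac g2)$ at all, and the correct response is to restrict Proposition \ref{p5} to $4\mid g$ and to flag its use in the proof of the Main Theorem (the connection of the component $(\tfrac g2,0,0)$ to the $x=0$ components) for $g\equiv 2\pmod 4$, not to force the commutation.
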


\begin{proof}
Let $X=V/\Lambda$ be a member of $\widetilde{\mathcal{F}}_{g/2}.$ Let $B=\{\lambda_i\}$ be the symplectic basis for $\Lambda$ with respect to which the period matrix of $X$ is $(I_g \, Z)$ with $Z$ as in \eqref{mod}.

Define the automorphism $\Phi$ of $\Lambda$ as follows:\begin{displaymath}
\Phi(\lambda_l) =  \left\{ \begin{array}{ll}
  -\lambda_l & \textrm{if $1 \le l \le \tfrac{g}{2}$ or $g+1 \le l \le \tfrac{3}{2}g$}\\
\,\,\,\, \lambda_l &  \textrm{if $\tfrac{g}{2}+1 \le l \le g$ or $\tfrac{3}{2}g+1 \le l \le 2g$}
\end{array} \right.
\end{displaymath}

It is easy to see that $\Phi$ is represented, with respect to  $B,$ by the matrix $$W\left(0,\tfrac{g}{2},\tfrac{g}{2}\right)+W\left(0,\tfrac{g}{2},\tfrac{g}{2}\right).$$

Now, the result follows from the equality $$W \left(0,\tfrac{g}{2},\tfrac{g}{2}\right) \cdot Z = Z \cdot W \left(0,\tfrac{g}{2},\tfrac{g}{2}\right).$$
\end{proof}

We finish this section by giving a bound on the number of components in the decomposition \eqref{dec1}.

\begin{prop}
The number of components in the decomposition \eqref{dec1} is at most\begin{displaymath}
 \left\{ \begin{array}{ll}
\,\,\,\,\, \tfrac{g(g+6)}{8} & \textrm{if $g $ is even; }\\

\tfrac{(g+5)(g-1)}{8} & \textrm{if $g $ is odd.}
\end{array} \right.
\end{displaymath}
\end{prop}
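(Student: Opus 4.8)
The plan is to bound the number of distinct components by counting the admissible triples in $\mathscr{P}_g$ up to the identification furnished by Proposition \ref{p1}. Since that proposition gives $\mathcal{A}_g^{\mathbb{Z}_2, (x,y,z)} = \mathcal{A}_g^{\mathbb{Z}_2, (x,z,y)}$, two admissible triples differing only by a transposition of their last two coordinates index the very same component. Hence the number of distinct components appearing in the decomposition \eqref{dec1} is at most the number of equivalence classes of admissible triples under the relation $(x,y,z) \sim (x,z,y)$, and it suffices to count these classes.

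First I would count, for each fixed $x$ with $0 \le x \le \lfloor g/2 \rfloor$, the number of unordered pairs $\{y,z\}$ of non-negative integers with $y+z = g-2x$. A direct count shows this number equals $\lfloor (g-2x)/2 \rfloor + 1$: there are $g-2x+1$ ordered pairs, exactly one of which is symmetric, and this symmetric pair occurs precisely when $g-2x$ is even. Observing that $\lfloor (g-2x)/2 \rfloor = \lfloor g/2 \rfloor - x$ throughout the relevant range of $x$ (the parity of $g-2x$ coincides with that of $g$), the count for fixed $x$ simplifies to $\lfloor g/2 \rfloor - x + 1$.

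Next I would sum this quantity over $x$. Setting $m = \lfloor g/2 \rfloor$, the sum telescopes to a triangular number, namely $\sum_{x=0}^{m} (m - x + 1) = \tfrac{1}{2}(m+1)(m+2)$. Finally, I would account for the two extremal triples $(0,0,g)$ and $(0,g,0)$ that are excluded from $\mathscr{P}_g$. The point here is that, under the relation $\sim$, these two triples lie in a single equivalence class; therefore removing them subtracts exactly one class from the count, yielding the bound $\tfrac{1}{2}(m+1)(m+2) - 1$.

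It then remains to substitute $m = g/2$ when $g$ is even and $m = (g-1)/2$ when $g$ is odd, and to simplify. The even case gives $\tfrac{(g+2)(g+4)}{8} - 1 = \tfrac{g(g+6)}{8}$, and the odd case gives $\tfrac{(g+1)(g+3)}{8} - 1 = \tfrac{(g+5)(g-1)}{8}$, as claimed. There is no genuine difficulty in this argument; the only mild subtlety—and hence the sole \emph{obstacle}, such as it is—is the correct bookkeeping of the floor function together with the fact that the excluded pair of extremal triples constitutes a single $\sim$-class, so that one subtracts one rather than two. Everything else is routine arithmetic.
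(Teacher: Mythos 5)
Your proof is correct and takes essentially the same route as the paper: both bound the number of components by counting admissible triples modulo the identification $(x,y,z)\sim(x,z,y)$ of Proposition \ref{p1}, with careful handling of the excluded extremal pair $(0,0,g),(0,g,0)$. The only cosmetic difference is that you count unordered pairs $\{y,z\}$ directly for each $x$, whereas the paper starts from the total number of triples (quoting \cite{VG}) and subtracts the number $s_x$ of duplicated pairs; the arithmetic is equivalent.
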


\begin{proof}  Following the proof of Proposition 2.5 in \cite{VG}, the number of triples $(x,y,z)$ of non-negative integers $x,y,z$ such that $2x+y+z=g$ is  \begin{equation*} \label{doble}
 n= \left\{ \begin{array}{ll}
\,\,\,\,\,\tfrac{(g+2)^2}{4} & \textrm{if $g $ is even; }\\
\tfrac{(g+1)(g+3)}{4} & \textrm{if $g $ is odd.}
\end{array} \right.
\end{equation*}

According to Proposition \ref{p1}, we can ensure that:
\begin{enumerate}
\item[(a)] there are
\begin{displaymath}
s_0:= \left\{ \begin{array}{ll}
\tfrac{g-2}{2} & \textrm{if $g $ is even; }\\
\tfrac{g-1}{2} & \textrm{if $g $ is odd }
\end{array} \right.
\end{displaymath}pairs of triples of the form $(0,y,z)$ giving rise to $s_0$ components in the decomposition \eqref{dec1}, and
\item[(b)] for each fixed $1 \le x \le [\tfrac{g}{2}],$ there are\begin{displaymath}
s_x:= \left\{ \begin{array}{ll}
\,\,\,\tfrac{g-2x}{2} & \textrm{if $g $ is even; }\\
\tfrac{g-2x+1}{2} & \textrm{if $g $ is odd }
\end{array} \right.
\end{displaymath}pairs of triples of the form $(x,y,z)$ giving rise to $s_x$ components in the decomposition \eqref{dec1}.
\end{enumerate}

Thus after disregarding, in addition, the extremal triples $(0,g,0)$ and $(0,0,g),$ the number of components in the decomposition \eqref{dec1} is at most
\begin{displaymath}   n-2-\sum_{x=0}^{[\frac{g}{2}]}s_x=
 \left\{ \begin{array}{ll}
\,\,\,\,\, \tfrac{g(g+6)}{8} & \textrm{if $g $ is even; }\\

\tfrac{(g+5)(g-1)}{8} & \textrm{if $g $ is odd.}
\end{array} \right.
\end{displaymath}
\end{proof}

\section{Proof of the Main Theorem: $\mathcal{A}_g^{\mathbb{Z}_2}$ is connected} \label{s4ymedio}
Let $g \ge 3$ and let $x,y,z$ be non-negative integers such that $(x,y,z)$ belongs to the set of admissible triples $\mathscr{P}_g.$ 

\s

As in \cite{surveyrubi}, denote by $S(x,y,z)$  the set of matrices in $\mathscr{H}_g$ of the form $$
\left(
     \begin{array}{ccc}
        X & Y  & U  \\
        Y^t & Z_y  & 0_{y,z}  \\
        U^t &  0_{z,y}  & Z_z  \\
     \end{array}
   \right) ,
$$
where $X$ is a $2x \times 2x$ complex symmetric matrix
of the form
$$
X = \left(
      \begin{array}{cccc}
         X_1 & X_2  & \ldots  &  X_{x}  \\
          & X_{x+1}  & \ldots  & X_{2x-1}  \\
          &   & \ddots  &   \\
          &   &   & X_{\frac{x(x+1)}{2}}  \\
      \end{array}
    \right),  \text{ with } X_j = \left(
  \begin{array}{cc}
     2a_j & a_j  \\
     a_j & b_j  \\
  \end{array}
\right),
$$
$Y$ is a $2x \times y$ complex  matrix of the form
$$
Y = \left(
      \begin{array}{cccc}
         Y_1 & Y_2  & \ldots  &  Y_{y}  \\
         Y_{y+1} &  & \ldots  & Y_{2y}  \\
          &   & \vdots  &   \\
          &   &   & Y_{xy}  \\
      \end{array}
    \right),  \text{ with } Y_j = \left(
  \begin{array}{c}
     0   \\
     c_j   \\
  \end{array}
\right),
$$
$U$ is a $2x \times z$ complex matrix of the form
$$
U = \left(
      \begin{array}{cccc}
         U_1 & U_2  & \ldots  &  U_{z}  \\
         U_{z+1} &   & \ldots  & U_{2z}  \\
          &   & \vdots  &   \\
          &   &   & U_{xz}  \\
      \end{array}
    \right),  \text{ with } U_j = \left(
  \begin{array}{c}
     2d_j   \\
     d_j   \\
  \end{array}
\right),
$$
$Z_n$ is a complex symmetric $n \times n$ matrix, with $n=y$ first and then $n=z$,  and $0_{y,z}$
is the zero $y \times z$ matrix.

\s

According to \cite[Lemma 5.2]{surveyrubi} and \cite[Theorem 5.3]{surveyrubi}, the set $S(x,y,z)$ satisfies the following  property: for each ppav of dimension $g$ admitting an involution of type $(x,y,z)$, after isomorphism, there exists a symplectic basis with respect to which its period matrix is given by $(I_g \, Z)$ with $Z \in S(x,y,z)$. Hence, the canonical projection $\mathscr{H}_g \to \mathcal{A}_g$ restricts to a projection
$$
S(x,y,z) \to \mathcal{A}_g^{\mathbb{Z}_2, (x,y,z)} \, .
$$

We claim that each $S(x,y,z)$ is connected. Indeed, this follows  from the given explicit description of the elements of $S(x,y,z)$, from which it is clear that  $S(x,y,z)$ is closed under sums and under multiplication by positive real numbers; thus, it is convex and therefore it is connected, as claimed.

Now the proof of the Main Theorem follows directly from Propositions \ref{p1}, \ref{t1}, \ref{pp4}, \ref{p4} and \ref{p5}. \qed

\section{The relationship with the decomposition in \cite{VG}}  \label{s4}

Let $X=V/\Lambda$ be a ppav of dimension $g$ and let $n \ge 3$ be an integer. Consider the $n$-torsion points subgroup of $X$ $$X_n:=\{x \in X :  x+ \stackrel{n}{\cdots} +x=0 \} \cong \tfrac{1}{n} \Lambda / \Lambda \cong (\mathbb{Z}_n)^{2g}$$

Each automorphism $\varphi$ of $X$ restricts to a group automorphism $\varphi_n$ of $X_n;$ this is known as the {\it $n$-level structure} of $\varphi.$ Moreover, if the rational representation of $\varphi$ is given by a matrix $$R \in \mbox{Sp}(2g, \mathbb{Z}),$$then $\varphi_n$ can be represented by the matrix obtained after reducing the entries of $R$ modulo $n;$ it will be denoted by $$R_n \in \mbox{GL}(2g, \mathbb{Z}_n).$$

\s

Following \cite{VG}, let us denote by $\mathcal{A}_g(q, \varphi_n)$  the set of ppavs of dimension $g$ admitting an automorphism of prime order $q$ whose $n$-level structure is conjugate to $\varphi_n$ in  $\mbox{GL}(2g, \mathbb{Z}_n).$  The main result of \cite{VG} asserts that, for $g,n \ge 3$ and $q$ prime, the set $\mathcal{A}_g(q, \varphi_n)$ is an irreducible algebraic subvariety of $\mathcal{A}_g.$ Moreover, for fixed $n$ \begin{equation*} \label{dec2} \mbox{Sing}(\mathcal{A}_g)= \bigcup\mathcal{A}_g(q, \varphi_n),\end{equation*}where the union is taken over all ppavs $X$ of dimension $g,$ and over all non-conjugate $n$-level structures of automorphisms $\varphi$ of prime order $q$ of $X.$

In particular, for fixed $n$, the decomposition \begin{equation} \label{dec3} \mathcal{A}_g^{\mathbb{Z}_2}= \bigcup\mathcal{A}_g(2, j_n).\end{equation}is obtained, where the union is taken over all ppavs $X$ of dimension $g$ and over all non-conjugate $n$-level structures of involutions $j$ ($ \neq -id$) of $X.$

\s

The following result relates the decompositions \eqref{dec1} and \eqref{dec3} of $\mathcal{A}_g^{\mathbb{Z}_2}.$
\begin{prop}
There is a surjective correspondence \begin{equation*} \label{cih} \Xi : \{ \mathcal{A}_g^{\mathbb{Z}_2, (x,y,z)} : (x,y,z) \in \mathscr{P}_g \} \to \{ \mathcal{A}_g(2, j_n) : j \neq -id\, \mbox{ is an involution} \} \end{equation*} from the set of the components of the decomposition \eqref{dec1} onto the set of the components of the decomposition  \eqref{dec3}.
\end{prop}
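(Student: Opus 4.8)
The plan is to construct $\Xi$ by reducing rational representations modulo $n$. For an admissible triple $(x,y,z) \in \mathscr{P}_g$, write $R(x,y,z) = W(x,y,z) + \transp{W}(x,y,z) \in \mbox{Sp}(2g,\mathbb{Z})$ for Reiner's normal form and $R(x,y,z)_n \in \mbox{GL}(2g,\mathbb{Z}_n)$ for its reduction modulo $n$. I would set
\[
\Xi\!\left(\mathcal{A}_g^{\mathbb{Z}_2,(x,y,z)}\right) = \mathcal{A}_g(2, R(x,y,z)_n).
\]
Each ppav in $\mathcal{A}_g^{\mathbb{Z}_2,(x,y,z)}$ carries, after a symplectic change of basis, an involution whose rational representation equals $R(x,y,z)$, and any two such representations are conjugate by some $P \in \mbox{Sp}(2g,\mathbb{Z})$. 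As reduction modulo $n$ is a homomorphism carrying $P$ to $P_n \in \mbox{Sp}(2g,\mathbb{Z}_n) \subset \mbox{GL}(2g,\mathbb{Z}_n)$, all these representations reduce to $\mbox{GL}(2g,\mathbb{Z}_n)$-conjugate $n$-level structures; hence $\mathcal{A}_g(2, R(x,y,z)_n)$ depends only on the type $(x,y,z)$.

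The point requiring genuine care, which I regard as the main obstacle, is that the domain consists of subsets of $\mathcal{A}_g$: by Proposition \ref{p1} the triples $(x,y,z)$ and $(x,z,y)$ name the very same component of \eqref{dec1}, so $\Xi$ must give them equal values. At the matrix level this is delicate, since $R(x,z,y)_n$ is conjugate to $-R(x,y,z)_n$, and these classes are usually distinct; indeed their traces are $2(z-y)$ and $2(y-z)$ modulo $n$, which differ whenever $y \neq z$. The ambiguity nevertheless disappears after passing to the associated subvarieties: if a ppav admits an involution with $n$-level structure conjugate to $j_n$, then composing with $-id$ produces an involution with $n$-level structure conjugate to $-j_n$, and conversely, so
\[
\mathcal{A}_g(2, j_n) = \mathcal{A}_g(2, -j_n)
\]
as subsets of $\mathcal{A}_g$. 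Taking $j_n = R(x,y,z)_n$ and recalling $R(x,z,y)_n \sim -R(x,y,z)_n$ yields $\mathcal{A}_g(2, R(x,y,z)_n) = \mathcal{A}_g(2, R(x,z,y)_n)$, so $\Xi$ is well defined on the components of \eqref{dec1}.

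Surjectivity then follows directly from Reiner's classification. A component of \eqref{dec3} has the form $\mathcal{A}_g(2, j_n)$ for an involution $j \neq -id$ on some ppav, whose rational representation $R_j \in \mbox{Sp}(2g,\mathbb{Z})$ satisfies $R_j^2 = I_{2g}$. By Reiner's theorem $R_j$ is symplectically conjugate to $R(x,y,z)$ for non-negative integers with $2x+y+z = g$; since $j$ is an involution different from $-id$, the extremal triples $(0,0,g)$ and $(0,g,0)$ are ruled out, so $(x,y,z) \in \mathscr{P}_g$. Reducing the conjugating matrix modulo $n$ shows $j_n = (R_j)_n$ is $\mbox{GL}(2g,\mathbb{Z}_n)$-conjugate to $R(x,y,z)_n$, whence $\mathcal{A}_g(2, j_n) = \Xi(\mathcal{A}_g^{\mathbb{Z}_2,(x,y,z)})$. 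Thus every component of \eqref{dec3} lies in the image, and $\Xi$ is the asserted surjective correspondence.
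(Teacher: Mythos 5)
Your proposal is correct, and its core mechanism is the same as the paper's: define $\Xi$ on the component of type $(x,y,z)$ as the subvariety $\mathcal{A}_g(2,j_n)$ attached to the reduction mod $n$ of the rational representation, check well-definedness by reducing a symplectic conjugation to a $\mathrm{GL}(2g,\mathbb{Z}_n)$-conjugation, and get surjectivity by assigning to a given $\mathcal{A}_g(2,j_n)$ the Reiner type of $j$. Where you genuinely go beyond the paper is the ``main obstacle'' you isolate: the paper's well-definedness check only compares two involutions \emph{of the same type}, and silently passes over the fact that, by Proposition \ref{p1}, the triples $(x,y,z)$ and $(x,z,y)$ label the \emph{same element} of the domain, so a function on the set of components must take equal values on them. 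Your resolution is the right one: $R(x,z,y)_n$ is conjugate to $-R(x,y,z)_n$ (this follows from the symplectic conjugacy in the proof of Proposition \ref{p1}), the two classes are generally distinct in $\mathrm{GL}(2g,\mathbb{Z}_n)$ (your trace computation $2(z-y)$ versus $2(y-z)$ is correct, since $\operatorname{tr} J_1=0$), yet $\mathcal{A}_g(2,j_n)=\mathcal{A}_g(2,-j_n)$ as subsets of $\mathcal{A}_g$ because composing a witness involution with $-id$ toggles the sign of the level structure. This observation is moreover consistent with, and illuminates, the paper's Section 7 computation where non-conjugate rational representations yield equal subvarieties $\mathcal{A}_3(2,j_p)=\mathcal{A}_3(2,i_p)$. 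One micro-point you leave implicit: in the toggling argument the witness $k$ must satisfy $k\neq -id$ (respectively $k \neq id$) for $(-id)\circ k$ to be an involution; this is automatic for $n\ge 3$, since $R(x,y,z)_n=\pm I_{2g}$ would force $R(x,y,z)\equiv \pm I_{2g} \pmod n$, which fails for every admissible triple (the $J_1$ blocks have an off-diagonal $1$, and the diagonal mixes $1$ and $-1$ unless the triple is extremal). With that sentence added, your argument is complete and is strictly more careful than the paper's own proof on the well-definedness step.
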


\begin{proof}
Let $x,y,z$ be non-negative fixed integers such that $(x,y,z) \in \mathscr{P}_g.$ If $X$ is a ppav admitting an involution $j$ of type $(x,y,z),$ then we define $$\Xi(\mathcal{A}_g^{\mathbb{Z}_2, (x,y,z)} ):=\mathcal{A}_g(2, j_n)$$where $j_n: X_n \to X_n$ stands for the $n$-level structure of $j.$

We claim that $\Xi$ is well-defined; indeed, if $Y$ is another ppav admitting an involution $i$ of the same type $(x,y,z)$ then the rational representations of $j$ and $i$ are conjugate in the symplectic group. In other words, there exists a symplectic matrix $E \in \mbox{Sp}(2g, \mathbb{Z})$ such that $$ \rho_r(j) \cdot E = E \cdot  \rho_r(i),$$ and therefore the corresponding $n$-level structures $j_n$ and $i_n$ are conjugate by $E_n,$ which belongs to $\mbox{GL}(2g, \mathbb{Z}_n).$ Thus $$ \mathcal{A}_g(2, j_n) =  \mathcal{A}_g(2, i_n)$$ showing that $\Xi$ is well-defined, as desired.

The surjectivity is easy to see; in fact, given $\mathcal{A}_g(2, j_n)$ we have that $$\Xi(\mathcal{A}_g^{\mathbb{Z}_2, (x_0,y_0,z_0)} )=\mathcal{A}_g(2, j_n)$$where the tuple $(x_0,y_0,z_0)$ is chosen as the type of $j.$

The proof is done.
\end{proof}

The following corollary follows directly from the way in which $\Xi$ was constructed.

\begin{coro} \label{fono}
If $ \Xi(\mathcal{A}_g^{\mathbb{Z}_2, (x,y,z)})=\mathcal{A}_g(2, j_n)$ then $$\mathcal{A}_g^{\mathbb{Z}_2, (x,y,z)}  \subseteq \mathcal{A}_g(2, j_n).$$
\end{coro}

\section{The relationship with $\mathscr{M}_g^{\mathbb{Z}_2}$} \label{s5}

Let $\mathscr{M}_g$ denote the moduli space of smooth irreducible complex algebraic curves of genus $g \ge 2.$ It is a known fact that $\mathscr{M}_g$ is endowed with a structure of complex analytic space of dimension $3g-3.$ See, for example, \cite{Nag}.

\s

The connectedness of the singular locus of $\mathscr{M}_g$ has been extensively studied ever since, in 1888, Bolza  \cite{B} proved that the curve $$y^2=x^5-1$$represents an isolated singular point in $\mathscr{M}_2$; see also \cite{K}. Furthermore, by the results of Igusa \cite{Igusa}, this point is the unique singular point of $\mathscr{M}_2.$

It is known that the singular locus of $\mathscr{M}_3$ agrees with those points representing curves $S$ admitting non-trivial automorphisms, with the exception of those hyperelliptic ones whose reduced automorphism group (i.e. $\mbox{Aut}(S)/\langle \iota \rangle$ with $\iota$ denoting the hyperelliptic involution) is trivial; see \cite{Oort13}. Meanwhile, for $g \ge 4$ the singular locus of $\mathscr{M}_g$ agrees with the branch locus of the canonical projection $$T_g\to \mathscr{M}_g$$where $T_g$ denotes the Teichm\"{u}ller space of Riemann surfaces of genus $g.$ Thus, for $g \ge 4$ the singular locus of $\mathscr{M}_g$ corresponds to those points representing curves admitting non-trivial automorphisms; see \cite{Popp} and \cite{Rauch}, and also \cite{oort}.

\s

It was proved in \cite{b} that the singular locus of the moduli space of algebraic curves admits an equisymmetric stratification \begin{equation*} \mbox{Sing}(\mathscr{M}_g) = \bigcup \mathscr{M}_g^{\mathbb{Z}_q, \theta}\end{equation*}where the stratum $\mathscr{M}_g^{\mathbb{Z}_q, \theta}$ corresponds to the set of points representing curves of genus $g$ admitting the action of a cyclic group of prime order $q$ with fixed topological class $\theta$.

As mentioned in the introduction of this paper, nowadays, the connectedness or not of the singular locus of $\mathscr{M}_g$ is known, according to the value of $g.$

\s

Assume $g \ge 3$ and let $t:=[\tfrac{g+1}{2}].$ We proceed to list some known facts for the case $q=2.$
\begin{enumerate}

\s

\item[(a)] There are exactly $t+1$ topologically non-equivalent actions of $\mathbb{Z}_2$ on algebraic curves $C$ of genus $g.$ These actions, say $\theta_i,$ are parametrized
by the genus of the corresponding quotient; namely, the $$\mbox{action of type }\theta_i \iff \mbox{ signature }(i; 2, \stackrel{2g+2-4i}{\ldots}, 2)$$for $0 \le i \le t.$  We recall that the signature of the action is $(i; 2, \stackrel{2g+2-4i}{\ldots}, 2)$ means that the corresponding quotient Riemann surface has genus $i,$ and the associated two-fold branched regular covering map ramifies over exactly $2g+2-4i$ values.

Note that $\theta_0$ corresponds to the action given by the hyperelliptic involution.

\s

\item[(b)] In particular, the following decomposition of $\mathscr{M}_g^{\mathbb{Z}_2}$ is obtained:\begin{equation} \label{decm}
\mathscr{M}_g^{\mathbb{Z}_2}=\bigcup_{i=0}^{t} \mathscr{M}_g^{\mathbb{Z}_2, \theta_i}
\end{equation}Note that if $g=3,$ then the stratum with $i=0$ corresponds to those hyperelliptic curves with non-trivial reduced automorphism group.

\s

\item[(c)] Following \cite[Theorem 6]{m1}, there exists a $g$-dimensional family of algebraic curves of genus $g$ admitting simultaneously $\mathbb{Z}_2$-actions of type $\theta_i$ and of type $\theta_{t}$. In other words, $$\mathscr{M}_g^{\mathbb{Z}_2, \theta_i} \cap \mathscr{M}_g^{\mathbb{Z}_2, \theta_t} \neq \emptyset,$$for each $0 \le i \le t-1.$

\s

\item[(d)] Following \cite[Theorem 6.1]{surveyrubi}, if an algebraic curve $C$ admits a $\mathbb{Z}_2$-action of type $\theta_i,$ with $i \neq 0,$ then its Jacobian variety $JC$ (endowed with the canonical polarization) admits an involution of type:
\begin{displaymath} (x,y,z)=
 \left\{ \begin{array}{ll}
(\tfrac{g-1}{2},0,1) & \textrm{if $i=\tfrac{g+1}{2} $ and $g $ odd }\\
(i, g-2i,0) &  \textrm{otherwise}
\end{array} \right.
\end{displaymath}
\end{enumerate}

By (b) the stratum corresponding to the action $\theta_{t}$ is {\it special} with respect to the decomposition \eqref{decm}, in the sense that it has non-trivial intersection with all other strata. By (d) this {\it special} stratum is in correspondence with the component \begin{equation*}
 \left\{ \begin{array}{ll}
(\tfrac{g-1}{2},0,1) & \textrm{if $g $ odd }\\
(\tfrac{g}{2}, 0, 0) &  \textrm{if $g $ even }
\end{array} \right.
\end{equation*}
of the decomposition \eqref{dec1} of $\mathcal{A}_g^{\mathbb{Z}_2}.$

Consequently, by considering the Torelli map $\mathscr{M}_g \to \mathcal{A}_g$ (which associates to each curve $C$ its Jacobian $JC$) it follows immediately that $$\mathcal{A}_g^{\mathbb{Z}_2,(i,g-2i,0)} \cap \mathcal{A}_g^{\mathbb{Z}_2, (\frac{g-1}{2},0,1)} \neq \emptyset$$for each $1 \le i < \tfrac{g+1}{2}$ when $g$ is odd, and $$\mathcal{A}_g^{\mathbb{Z}_2,(i,g-2i,0)} \cap \mathcal{A}_g^{\mathbb{Z}_2, (\frac{g}{2}, 0, 0)} \neq \emptyset$$for each $1 \le i < \tfrac{g}{2}$ when $g$ is even (more precisely, the previous intersections contain a $g$-dimensional family of Jacobians).

\s

The following result assert that for $g$ odd each component of $\mathcal{A}_g^{\mathbb{Z}_2}$ is, in fact, {\it special} with respect to the decomposition \eqref{dec1}; namely, all components have non-empty common intersection.

\begin{prop} Let $g\ge 3$ be odd. There exists a $g$-dimensional family $\mathcal{F}$ of ppavs of dimension $g$ such that, if $X \in \mathcal{F},$ then:
\begin{enumerate}
\item[(a)] $X$ admits an involution of every type $(x,y,z) \in \mathscr{P}_g.$
\item[(b)]  $X$ is isomorphic as ppav to the product $$A_1 \times \cdots \times A_{\frac{g-1}{2}} \times E$$where $E$ is an elliptic curve, and each $A_j$ is a ppa surface represented by the period matrix $(I_2 \, Z_j)$ where $Z_j \in \mathscr{H}_2$ is the matrix of the form
\begin{equation*} \left(  \begin{smallmatrix}
2a_{j} & a_{j} \\
 a_{j} & b_{j}
\end{smallmatrix} \right).\end{equation*}
\item[(c)] the automorphism group of $X$ as ppav has order at least $\tfrac{(g+1)(g+3)}{4}.$
\end{enumerate}
\end{prop}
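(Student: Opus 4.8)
The plan is to exhibit a single explicit $g$-dimensional family $\mathcal{F}$ and verify directly that every member admits an involution of every admissible type. The natural candidate, suggested by the earlier propositions, is the family of ppavs with period matrix $(I_g\,Z)$ where
\begin{equation*}
Z = X_1 + X_{(g+2)/2} + \cdots + X_{g(g+2)/8} + \mathrm{diag}(b_{g,g}),
\end{equation*}
that is, a block-diagonal sum of $\tfrac{g-1}{2}$ blocks of the form $X_j = \left(\begin{smallmatrix} 2a_j & a_j \\ a_j & b_j\end{smallmatrix}\right)$ together with one $1\times 1$ block $b_{g,g}$ for the elliptic factor. This immediately gives part (b): the block-diagonal form of $Z$ forces the ppav to split as the product of the $\tfrac{g-1}{2}$ abelian surfaces $A_j = \langle I_2\,Z_j\rangle$ and the elliptic curve $E = \langle 1\,b_{g,g}\rangle$, and it depends on exactly $2\cdot\tfrac{g-1}{2} + 1 = g$ moduli (the parameters $a_j, b_j$ and $b_{g,g}$), so the family is $g$-dimensional as claimed.

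For part (a), the key observation is that the block $X_j$ satisfies the intertwining relation $\left(\begin{smallmatrix}1&0\\1&-1\end{smallmatrix}\right) X_j = X_j \transp{\left(\begin{smallmatrix}1&0\\1&-1\end{smallmatrix}\right)}$, which is precisely the relation exploited in the proof of Proposition \ref{pp4}; this is what allows a $J_1$-type block to act as an involution. First I would note that $\mathcal{F} \subseteq \mathcal{F}_{(g-1)/2}$ in the notation of Proposition \ref{pp4}, since $\mathcal{F}$ is the subfamily obtained by imposing $U=0$ and the off-diagonal $X_j = 0$. Proposition \ref{pp4} already shows that each member admits involutions of type $(\tfrac{g-1}{2}, y, 1-y)$ for the relevant $y$, giving the components with $x = \tfrac{g-1}{2}$. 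To obtain the components with smaller $x$, I would construct, for each admissible triple $(x,y,z)$, an explicit automorphism $\Phi_{(x,y,z)}$ of $\Lambda$ represented by $W(x,y,z)+\transp{W}(x,y,z)$, designed so that its $x$ diagonal $J_1$-blocks act on $x$ of the surface factors $A_j$ (using the intertwining relation) while the $(-I_y)$ and $I_z$ parts act by $\pm\mathrm{id}$ on the remaining factors. Since the block-diagonal structure of $Z$ decouples the factors completely, the verification of the commutation relation $W(x,y,z)\cdot Z = Z\cdot \transp{W}(x,y,z)$ reduces to the single-block relation together with the trivial relations for the diagonal entries, exactly as in Propositions \ref{t1}, \ref{pp4}, \ref{p4} and \ref{p5}.

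The main obstacle is bookkeeping rather than conceptual: one must check that for \emph{every} $(x,y,z)\in\mathscr{P}_g$ there are enough surface factors and enough diagonal slots to accommodate the required $x$ copies of $J_1$ and the $y+z$ scalar blocks. Since $2x+y+z = g$ and $\mathcal{F}$ provides $\tfrac{g-1}{2}$ surface factors plus one elliptic factor, each of the $x$ copies of $J_1$ can be assigned to one of the surface blocks $X_j$ (requiring $x \le \tfrac{g-1}{2}$, which holds for all admissible triples), and the remaining $y+z = g-2x$ scalar slots are filled by the $\tfrac{g-1}{2}-x$ leftover surface blocks (each contributing two scalar slots via a diagonalizing change of variable) together with the elliptic slot. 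I would verify the parity count works out: $g - 2x = 2(\tfrac{g-1}{2} - x) + 1$, so the one odd slot is always supplied by the elliptic factor. This confirms part (a) for all triples simultaneously.

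Finally, part (c) follows by counting: by part (a) the automorphism group contains one involution for each type $(x,y,z)\in\mathscr{P}_g$, and by the formula in the proof of the final Proposition of Section \ref{propos}, the number of admissible triples (before identifying $(x,y,z)$ with $(x,z,y)$) together with $\pm\mathrm{id}$ accounts for at least $\tfrac{(g+1)(g+3)}{4}$ elements; more directly, the total number of triples with $2x+y+z=g$ is $\tfrac{(g+1)(g+3)}{4}$ for $g$ odd, and distinct types yield distinct conjugacy classes of involutions, hence distinct automorphisms. Thus $|\mathrm{Aut}(X)| \ge \tfrac{(g+1)(g+3)}{4}$, establishing (c).
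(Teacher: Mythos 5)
Your proposal is correct and takes essentially the same route as the paper, whose own proof is just the one-line remark that it ``follows the same arguments employed to prove the propositions of the third section'': you work with the block-diagonal family built from the blocks $X_j=\left(\begin{smallmatrix}2a_j & a_j\\ a_j & b_j\end{smallmatrix}\right)$ plus one elliptic slot, exploit the intertwining relation $\left(\begin{smallmatrix}1&0\\1&-1\end{smallmatrix}\right)X_j=X_j\transp{\left(\begin{smallmatrix}1&0\\1&-1\end{smallmatrix}\right)}$ exactly as in Propositions \ref{t1}--\ref{p5}, and get (c) by counting all triples with $2x+y+z=g$ (including the extremal ones giving $\pm id$). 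Two harmless slips: the leftover surface blocks must each carry a uniform sign $\pm I_2$ (no integral ``diagonalizing change of variable'' is available since $a_j\neq 0$, but your parity count $g-2x=2(\tfrac{g-1}{2}-x)+1$ already shows uniform signs suffice), and the block indices you transcribed from \eqref{mod} are the even-$g$ ones and should read $X_1, X_{\frac{g+1}{2}},\ldots$ for odd $g$.
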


\begin{proof}
The proof follows the same arguments employed to prove the propositions of the third section.
\end{proof}

\s

{\bf Remarks.} \mbox{}
\begin{enumerate}
\item[(a)] In order to prove the Main Theorem of the paper, the previous proposition can replace Propositions \ref{p1}, \ref{t1}, \ref{pp4}, \ref{p4} and \ref{p5} for the case $g$ odd.
\item[(b)] As an application of \cite[Proposition 4.4]{anita}, it can be seen that the ppa surfaces $A_j$ of the previous proposition are isogenous to the product of two elliptic curves.
\end{enumerate}
\section{Principally polarized abelian threefolds} \label{sejem}

In this section we consider the decomposition \eqref{dec1} for ppa threefolds:\begin{equation*} \mathcal{A}_3^{\mathbb{Z}_2}= \bigcup_{{(x,y,z) \in \mathscr{P}_3}} \mathcal{A}_3^{\mathbb{Z}_2, (x,y,z)} \end{equation*}

Note that the admissible triples in this case are exactly four; namely: $$(0,1,2), (0,2,1), (1,0,1),(1,1,0).$$

By Proposition 1 we can ensure that the components associated to $(1,0,1)$ and to $(1,1,0)$ agree, and that the components associated to $(0,1,2)$ and to $(0,2,1)$ agree. So, in this case the singular sublocus $\mathcal{A}_3^{\mathbb{Z}_2}$ can be written as a union of only two components, each one of dimension 4: \begin{equation} \label{dd1}\mathcal{A}_3^{\mathbb{Z}_2}=\mathcal{A}_3^{\mathbb{Z}_2, (0,1,2)} \cup \mathcal{A}_3^{\mathbb{Z}_2, (1,0,1)}\end{equation}

Let $S$ be a ppa surface described by a period matrix of the form $\left(  \begin{smallmatrix}
2a & a \\
 a & b
\end{smallmatrix} \right)$ and let $E$ be an elliptic curve. Then, by Proposition \ref{p4}, each member of the three-dimensional family $\mathcal{F}_1$ of ppa threefolds of the form $S \times E$ admits simultaneously involutions of type $(0,1,2)$ and $(1,0,1),$ showing that  \begin{equation} \label{dd2}\mathcal{A}_3^{\mathbb{Z}_2, (0,1,2)} \cap \mathcal{A}_3^{\mathbb{Z}_2, (1,0,1)} \neq \emptyset.\end{equation}

 Following Proposition \ref{t1}, the component of type $(0,1,2)$ contains the family $\mathcal{F}_0$ of ppa threefolds which are isomorphic to the product of three elliptic curves. Note that the families $\mathcal{F}_0$ and $\mathcal{F}_1$ (and, more generally, $\mathcal{F}_x$ for each $x \ge 0$) do not contain Jacobians, and that $\mathscr{F}_0$ does not intersect $\mathscr{F}_1.$

\s

Let us assume that $n=p$ is an odd prime number, and that $j$ and $i$ are involutions of type $(0,1,2)$ and $(1,0,1)$ respectively. Then, in spite of the fact that the rational representations of $j$ and $i$ are not conjugate in $\mbox{Sp}(6, \mathbb{Z}),$  the equality
\begin{equation*} \left( \begin{smallmatrix}
M & 0 \\
0 & \transp{M}
\end{smallmatrix} \right) \cdot \left( \begin{smallmatrix}
W(1,0,1) & 0 \\
0 & \transp{W}(1,0,1)
\end{smallmatrix} \right) = \left( \begin{smallmatrix}
W(0,1,2) & 0 \\
0 & {W}(0,1,2)
\end{smallmatrix} \right) \cdot \left( \begin{smallmatrix}
M & 0 \\
0 & \transp{M}
\end{smallmatrix} \right)
\end{equation*}where $$M=\left( \begin{smallmatrix}
1 & -2 & 0 \\
c & 0 & 0 \\
0 & 0 & 1
\end{smallmatrix} \right) \,\,\, \mbox{ with } \,\,\, -2c \equiv 1 \, \mbox{mod} \, p,$$shows that their restrictions $j_p$ and $i_p$ are conjugate in $\mbox{GL}(6,\mathbb{Z}_p).$ In other words, $$\mathcal{A}_3(2, j_p)=\mathcal{A}_3(2, i_p)=\mathcal{A}_3^{\mathbb{Z}_2},$$and therefore $\mathcal{A}_3^{\mathbb{Z}_2}$ is irreducible.

\s

We recall that, as mentioned in Section \ref{s5}, the component of type $(1,0,1)$ contains Jacobians and, by contrast, the component of type $(0,1,2)$ does not. Thus, the fact that $\mathcal{A}_3^{\mathbb{Z}_2}$ is irreducible together with \eqref{dd1} and \eqref{dd2} allow us to state that $$\mathcal{A}_3^{\mathbb{Z}_2, (0,1,2)} \subset \mathcal{A}_3^{\mathbb{Z}_2, (1,0,1)}=\mathcal{A}_3^{\mathbb{Z}_2}.$$

In particular we obtain:

\begin{prop}
If a ppa threefold admits an involution different from $-id$ then it admits an involution of type $(1,0,1).$
\end{prop}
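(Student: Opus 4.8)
The plan is to show the stated containment $\mathcal{A}_3^{\mathbb{Z}_2, (0,1,2)} \subset \mathcal{A}_3^{\mathbb{Z}_2, (1,0,1)}$, from which the proposition is immediate: a ppa threefold admitting an involution different from $-id$ has type in $\mathscr{P}_3 = \{(0,1,2),(0,2,1),(1,0,1),(1,1,0)\}$, and by Proposition \ref{p1} the types $(0,2,1)$ and $(1,1,0)$ collapse to $(0,1,2)$ and $(1,0,1)$ respectively, so every such threefold lies in $\mathcal{A}_3^{\mathbb{Z}_2, (0,1,2)} \cup \mathcal{A}_3^{\mathbb{Z}_2, (1,0,1)}$. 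Once the containment is established, both cases yield an involution of type $(1,0,1)$.

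First I would assemble the three ingredients already proved in this section. From \eqref{dd1} we have the two-component decomposition $\mathcal{A}_3^{\mathbb{Z}_2}=\mathcal{A}_3^{\mathbb{Z}_2, (0,1,2)} \cup \mathcal{A}_3^{\mathbb{Z}_2, (1,0,1)}$; from \eqref{dd2} the two components meet, since the family $\mathcal{F}_1$ of products $S\times E$ lies in both; and from the explicit conjugating matrix $M$ (with $-2c\equiv 1 \bmod p$) the $n$-level structures $j_p$ and $i_p$ of involutions of the two types are conjugate in $\mathrm{GL}(6,\mathbb{Z}_p)$, giving $\mathcal{A}_3(2,j_p)=\mathcal{A}_3(2,i_p)$. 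By the main result of \cite{VG} each $\mathcal{A}_3(2,\varphi_p)$ is irreducible, so this common set is an irreducible subvariety equal to all of $\mathcal{A}_3^{\mathbb{Z}_2}$; hence $\mathcal{A}_3^{\mathbb{Z}_2}$ is irreducible.

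Next I would invoke Corollary \ref{fono}: since $\Xi(\mathcal{A}_3^{\mathbb{Z}_2,(1,0,1)})=\mathcal{A}_3(2,i_p)=\mathcal{A}_3^{\mathbb{Z}_2}$, we get $\mathcal{A}_3^{\mathbb{Z}_2,(1,0,1)}\subseteq \mathcal{A}_3(2,i_p)=\mathcal{A}_3^{\mathbb{Z}_2}$. Combined with \eqref{dd1} this forces $\mathcal{A}_3^{\mathbb{Z}_2,(1,0,1)}=\mathcal{A}_3^{\mathbb{Z}_2}$, and therefore $\mathcal{A}_3^{\mathbb{Z}_2,(0,1,2)}\subseteq \mathcal{A}_3^{\mathbb{Z}_2,(1,0,1)}$, exactly the containment displayed just before the proposition. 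Reading off the conclusion: if $[X]\in\mathcal{A}_3^{\mathbb{Z}_2}$ then $[X]\in\mathcal{A}_3^{\mathbb{Z}_2,(1,0,1)}$, meaning $X$ carries an involution of type $(1,0,1)$.

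The main obstacle I anticipate is conceptual rather than computational: the claim that $\mathcal{A}_3^{\mathbb{Z}_2,(0,1,2)}\subset\mathcal{A}_3^{\mathbb{Z}_2,(1,0,1)}$ asserts a strictly stronger statement than the mere set-theoretic union and intersection facts \eqref{dd1}--\eqref{dd2} — two four-dimensional components meeting is not enough for one to be contained in the other. The leverage that makes the containment work is the irreducibility supplied by \cite{VG} together with the level-structure coincidence $\mathcal{A}_3(2,j_p)=\mathcal{A}_3(2,i_p)$; the delicate point is that the rational representations of $j$ and $i$ are genuinely non-conjugate in $\mathrm{Sp}(6,\mathbb{Z})$ (so the two types are distinct), yet become conjugate after reduction mod $p$, which is precisely why a single irreducible variety $\mathcal{A}_3(2,i_p)$ absorbs both components. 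Verifying the displayed matrix identity and checking that the entries of $M$ reduce to an invertible matrix mod $p$ is the only computation, and it is routine given the congruence $-2c\equiv 1\bmod p$ and $\det M = -2c \equiv 1 \not\equiv 0$.
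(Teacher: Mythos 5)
Your setup is correct and matches the paper through the irreducibility step: the decomposition \eqref{dd1}, the non-empty intersection \eqref{dd2}, the conjugacy of $j_p$ and $i_p$ via $M$, and the conclusion via \cite{VG} that $\mathcal{A}_3(2,j_p)=\mathcal{A}_3(2,i_p)=\mathcal{A}_3^{\mathbb{Z}_2}$ is irreducible are all as in the paper. But the step where you close the argument is a non sequitur. Corollary \ref{fono} gives $\mathcal{A}_3^{\mathbb{Z}_2,(1,0,1)}\subseteq\mathcal{A}_3(2,i_p)$, and since $\mathcal{A}_3(2,i_p)=\mathcal{A}_3^{\mathbb{Z}_2}$, this containment is vacuous: it says only that the $(1,0,1)$-component sits inside the whole singular sublocus, which is true by definition. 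Combining $B\subseteq S$ with $S=A\cup B$ does not force $B=S$. The decisive symptom that something is missing: your argument treats the two components identically, and Corollary \ref{fono} applied to $(0,1,2)$ gives $\mathcal{A}_3^{\mathbb{Z}_2,(0,1,2)}\subseteq\mathcal{A}_3(2,j_p)=\mathcal{A}_3^{\mathbb{Z}_2}$ just as well, so the same reasoning would ``prove'' $\mathcal{A}_3^{\mathbb{Z}_2,(0,1,2)}=\mathcal{A}_3^{\mathbb{Z}_2}$, which is false; any valid proof must use some asymmetry between the two components, and yours never does. (A small separate slip: $\det M = 2c \equiv -1 \bmod p$, not $-2c$; this does not affect invertibility.)

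The missing ingredient, which is exactly how the paper concludes, is twofold. First, irreducibility is used through the fact that an irreducible variety cannot be the union of two proper closed subsets, so \eqref{dd1} forces \emph{one} of the two components to equal all of $\mathcal{A}_3^{\mathbb{Z}_2}$ -- but irreducibility alone cannot tell you which. Second, the asymmetry is supplied by Jacobians, as recorded in Section \ref{s5}: via the Torelli map, genus-$3$ curves carrying an involution of type $\theta_i$ with $i\neq 0$ produce Jacobians lying in the component of type $(1,0,1)$, whereas the component of type $(0,1,2)$ contains no Jacobians of smooth curves, since its members decompose as polarized products of lower-dimensional ppavs and Jacobians of smooth curves are irreducible as ppavs. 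Hence $\mathcal{A}_3^{\mathbb{Z}_2,(0,1,2)}$ is a proper subset, so it is the $(1,0,1)$-component that must exhaust $\mathcal{A}_3^{\mathbb{Z}_2}$, giving $\mathcal{A}_3^{\mathbb{Z}_2,(0,1,2)}\subset\mathcal{A}_3^{\mathbb{Z}_2,(1,0,1)}=\mathcal{A}_3^{\mathbb{Z}_2}$ and, with Proposition \ref{p1} handling the triples $(0,2,1)$ and $(1,1,0)$ as you correctly noted, the proposition follows. You anticipated in your final paragraph that intersection of the two components is not enough and that irreducibility must carry the load -- that instinct was right -- but the load-bearing step you wrote down does not actually extract anything from irreducibility, and the Jacobian asymmetry is absent entirely.
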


\section{Final Remarks} \label{s6}

We end this article by mentioning some remarks:

\s

\begin{enumerate}

\item[(a)] To generalize our results from involutions to higher order automorphisms seems a difficult task; the main difficulty is that there does not exist a symplectic classification as it does in the case of involutions. We refer to the master thesis \cite{torres} for an attempt to finding an analogous classification in the order three case.

\s

\item[(b)] In \cite{VGL} the automorphisms of polarized abelian threefolds were classified; among them, in particular, ppa threefolds. Then, following the arguments employed for the case of algebraic curves of genus three, it seems reasonable to ask if this classification together with our results can be used to decide if the singular locus of $\mathcal{A}_3$ is or is not connected.

\s

\item[(c)] The number of isolated points in the singular locus of $\mathscr{M}_g$ is classically known for each $g \ge 2;$ see \cite{K}. In an analogous way, one can ask whether the singular locus of $\mathcal{A}_g$ has isolated points (see \cite{VG2} for the case $2g+1$ prime).
Towards answering the aforesaid question, we have noticed the following simple --but apparently unknown-- fact: if there exists an isolated singular point $X$ in $\mathcal{A}_g$, then the order of its reduced automorphism group must be odd. In fact, if the reduced automorphism group of $X$ were of even order, then $X$ would belong to some component $(x,y,z)$ and, consequently, it would not be isolated.

\s

\item[(d)] The decomposition \eqref{dec3} depends heavily on the choice of the integer $n;$ this dependence being reflected, for instance, in the number of components (if $g=3$ and $n=6,$ then  \eqref{dec3} has two components, instead of one as in the case $n=p$ prime). It would be interesting to understand the extent to which the number of components in \eqref{dec3} is affected by the choice of $n;$ this problem was unconsidered in \cite{VG}.

\end{enumerate}

\end{document}